 \newtheorem{theorem}{Theorem}[section]
 \newtheorem{definition}[theorem]{Definition}
 \newtheorem{lemma}[theorem]{Lemma}
 \newtheorem{pro}[theorem]{Proposition}
\author{ Gregory Seregin
}
\title{A Note On Local Regularity of Axisymmetric Solutions to the Navier-Stokes Equations}
\author{G.~Seregin\footnote{University of Oxford, Mathematical Institute, OxPDE, Oxford, UK and St Petersburg Department of Steklov Mathematical Institute, RAS, Russia, email address: \texttt{seregin@maths.ox.ac.uk}}
}
\begin{document}

\maketitle


\centerline{ To the memory of Olga Alexandrovna Ladyzhenskaya
}

\begin{abstract} In the paper, a new {\it slightly supercritical}
condition, providing {\it local} regularity of axially symmetric solutions to the non-stationary 3D Navier-Stokes equations, is discussed. It  generalises almost all known results in the local regularity theory of weak axisymmetric solutions.
\end{abstract}

{\bf Keywords} Navier-Stokes equations, axisymmetric solutions, local regularity

{\bf Data availability statement}
Data sharing not applicable to this article as no datasets were generated or analysed during the current study.

{\bf Acknowledgement} 
 The work is supported by the grant RFBR 20-01-00397.

\pagebreak

\setcounter{equation}{0}
\section{Introduction }

In the submitted note, we study   
potential singularities of axially symmetric flows of viscous incompressible fluids. 
It has been shown in  the paper \cite{Seregin2020}  that  an axisymmetric solution to the three-dimensional non-stationary Navier-Stokes equations with a bounded scale-invariant energy quantity is actually smooth,  i.e., axially symmetric solutions do no exhibit Type I blowups (see the paper \cite{Seregin2020} for more definitions and explanations).  

Let us consider the 3D non-stationary Navier-Stokes system
\begin{equation}\label{NSE}
\partial_tv+v\cdot\nabla v-\Delta v=-\nabla q,\qquad {\rm div}\,v=0	
\end{equation}
in the  space-time parabolic cylinder $Q=\mathcal C\times]-1,0[$, where $\mathcal C=\{x=(x_1,x_2,x_3):\,x_1^2+x^2_2<1,\,-1<x_3<1\}$ is a usual right circle cylinder in $\mathbb R^3$. In what follows, it is supposed that the pair $v$ and $q$ is the so-called {\it suitable weak 
  solution} to the Navier-Stokes equations in 
   $Q$. We recall a definition of such solutions.
\begin{definition}\label{sws}
Let $\omega\subset \mathbb R^3$ and $T_2>T_1$. The pair $w$ and $r$ is a suitable weak solution to the Navier-Stokes system in $Q_*=\omega\times ]T_1,T_2[$ if:

1. $w\in L_{2,\infty}(Q_*)$, $\nabla w\in L_2(Q_*)$, $r\in L_\frac 32(Q_*)$;

2. $w$ and $r$ satisfy the Navier-Stokes equations in $Q_*$ in the sense of distributions;

3. for a.a. $t\in [T_1,T_2]$, the local energy inequality
$$\int\limits_\omega\varphi(x,t)|w(x,t)|^2dx+2
\int\limits_{T_1}^t\int\limits_\omega\varphi|\nabla w|^2dxdt’\leq\int\limits_{T_1}^t\int\limits_\omega[|w|^2(\partial_t\varphi+\Delta \varphi)+$$
$$+w\cdot\nabla\varphi(|w|^2+2r)]dxdt’$$	holds for all non-negative $\varphi\in C^1_0(\omega\times ]T_1,T_2+(T_2-T_1)/2[).$
\end{definition}

Our standing assumption is that a suitable weak solution $v$ and $q$ to the Navier-Stokes equations in $Q=\mathcal C\times ]-1,0[$ is axially symmetric with respect to $x_3$-axis. It means the following: if we introduce the corresponding cylindrical coordinates $(r,\theta,x_3)$ and use the corresponding representation $v=v_r e_r+v_\theta e_\theta+v_3e_3$, then $v_{r,\theta}=v_{\theta,\theta}=v_{3,\theta}=q_{,\theta}=0$. Here, the comma in lower indices means the partial derivative in the indicated spatial direction.

With the  regards to the state of arts in the regularity theory of axially symmetric solutions to the Navier-Stokes equations, we could refer to the previous papers \cite{Seregin2020} and \cite{Seregin2021} of the author and especially to references therein. For example, one could mention the following very interesting papers:
 \cite{L1968}, \cite{UY1968},
\cite{LMNP1999}, \cite{NP2001}, \cite{Po01}, \cite{CL2002}, \cite{SZ2007}, \cite{ChenStrainYauTsai2009},
\cite{SS2009},  \cite{LeiZhang2011}, \cite{Pan16},\cite{LeiZhang2017}, \cite{ChenFangZhang2017},  \cite{SerZhou2018}, and \cite{ZhangZhang2014}.

In a sense, our note  is a continuation of  author's paper \cite{Seregin2021}, which, in turn, has been inspired   by  paper \cite{Pan16} of X. Pan, where the regularity of solutions has been proved under a slightly supercritical assumption. The aims of the present paper are to consider a local setting and  a different supercritical assumption.

In order to describe our supercritical assumption, additional notation is needed. Given $x=(x_1,x_2,x_3)\in \mathbb R^3$, denote $x'=(x_1,x_2,0)$. Next, different types of spatial cylinders will be denoted as $\mathcal C(r)=\{x:\,|x’|<r, |x_3|<r\}$, $\mathcal C(x_0,r)=\mathcal C(r)+x_0$, $Q^{\lambda,\mu}(r)=\mathcal C(\lambda r)\times]-\mu r^2,0[$, $Q^{1,1}(r)= Q(r)$, $Q^{\lambda,\mu}(z_0,r)=\mathcal C(x_0,\lambda r)\times]t_0-\mu r^2,t_0[$.  And, finally,  let
$$
f(R):=\frac{1}{\sqrt R} \Big(\int\limits^0_{-R^2}\Big(\int\limits_{\mathcal C(R)}	|\overline v|^3dx\Big)^\frac 43dt\Big)^\frac 34
$$
and 
$$
	M(R):=\frac{1}{\sqrt R}\Big(\int\limits_{Q(R)}|\overline  v|^\frac {10}3dz\Big)^\frac 3{10}$$
for any $0<R\leq  1$, with $\overline v=v_re_r+v_3e_3$, and  assume that:
\begin{equation}
	\label{sca1}
	f(R)+M(R)\leq g(R):=c_*\ln^\alpha\ln^\frac 12(1/R)
\end{equation}
for all $0<R\leq 2/3$, where $c_*$ and $\alpha$ are positive constants and $\alpha$ obeys the condition:
\begin{equation}
	\label{sca2} 0<\alpha\leq \frac 1{224}.
\end{equation}

In the paper \cite{Seregin2021},   the following completely local result has been stated. 
 \begin{theorem}
\label{mainresult}
Assume that a pair $v$ and $q$	is an axially symmetric suitable weak solution to the Navier-Stokes equations in $Q$ 
and let conditions (\ref{sca1}) and (\ref{sca2})
hold. Then
the origin $z=0$ is a regular point of $v$. 
\end{theorem}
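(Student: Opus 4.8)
The plan is to deduce regularity of $z=0$ from a Caffarelli--Kohn--Nirenberg type $\varepsilon$-regularity criterion, arguing by contradiction. Suppose $z=0$ were a singular point. Then, in its quantitative form, the local regularity theory for suitable weak solutions guarantees that the natural scale-invariant quantities cannot be small at any small scale: with $A(R)=R^{-1}\sup_{-R^2<t<0}\int_{\mathcal C(R)}|v|^2\,dx$, $E(R)=R^{-1}\int_{Q(R)}|\nabla v|^2\,dz$ and $D(R)=R^{-2}\int_{Q(R)}|q|^{3/2}\,dz$, the combined quantity $\Phi=A+E+D$ must satisfy $\Phi(R)\ge\varepsilon_0>0$ for all $0<R\le R_0$. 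The goal is to combine the slightly supercritical bound (\ref{sca1}) with the axisymmetric structure to produce a competing decay of $\Phi$ along a sequence of shrinking scales, contradicting this lower bound.

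First I would isolate the swirl. For an axisymmetric solution the quantity $\Gamma=rv_\theta$ solves a transport--diffusion equation with no source and hence obeys a maximum principle, so $v_\theta$ is controlled locally by $\overline v=v_re_r+v_3e_3$ and the data. This is exactly the reason the hypothesis (\ref{sca1}) is imposed only on the poloidal part $\overline v$, through $f(R)$ and $M(R)$: the swirl contributes no uncontrolled term to the energy balance and may be absorbed, while the genuinely critical object is $\overline v$. With the swirl under control, I would return to the local energy inequality of Definition \ref{sws} and to the interior estimate for the pressure, and estimate the convective and pressure terms at scale $\theta R$ in terms of the quantities at scale $R$. The nonlinear terms are handled by H\"older's inequality together with the anisotropic multiplicative inequalities adapted to the mixed norms defining $f$ and $M$; each such step produces a fixed power of the majorant $g(R)$.

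Carrying this out yields a recursive inequality of the schematic form
\begin{equation*}
\Phi(\theta R)\le C\,\theta^{a}\,\Phi(R)+C\,\theta^{-b}\,g(R)^{\gamma}\,\Phi(R)^{\delta},
\end{equation*}
with fixed positive exponents $a,b,\gamma,\delta$ determined by the interpolation and pressure estimates. The decisive point is the interplay between the decay $\theta^{a}$ and the growth of $g$. Since $g(R)=c_*\ln^\alpha\ln^{1/2}(1/R)$ increases only doubly-logarithmically, the factors $g(R_k)^{\gamma}$ accumulated along dyadic scales $R_k=\theta^k$ grow merely like a power of $\ln k$; choosing $\theta$ small and then tracking the exponents, one shows that the iteration forces $\Phi(R_k)$ to fall below $\varepsilon_0$ at some scale, the admissible rate of growth of $g$ being governed precisely by $\gamma\alpha$. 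The numerical threshold $\alpha\le 1/224$ in (\ref{sca2}) is the explicit outcome of this exponent arithmetic and of the double-logarithmic gain; once it is met, the contradiction with $\Phi(R)\ge\varepsilon_0$ is reached and $z=0$ must be regular.

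The step I expect to be the main obstacle is closing this iteration in the supercritical regime, where $g(R)\to\infty$ and so no single estimate is self-improving. One must show that the geometric gain $\theta^{a}$ per step dominates the slow growth of $g$ uniformly over all scales at once, which demands a sharp, quantitative bookkeeping of every exponent entering the convective and pressure bounds and of how the double logarithm propagates through them; it is this accounting, rather than any individual inequality, that pins down the constant $1/224$. A secondary difficulty is the nonlocal pressure: its contribution must be decomposed into a local part, estimated through $f$ and $M$, and a harmonic tail controlled by the energy at the larger scale, with both pieces kept within the same $g$-power budget so that the recursion remains consistent.
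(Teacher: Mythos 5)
Your proposal has a genuine gap at its core: the recursive inequality you write down cannot do the job you assign to it. A recursion of the form $\Phi(\theta R)\le C\theta^a\Phi(R)+C\theta^{-b}g(R)^\gamma\Phi(R)^\delta$ with an inhomogeneous term that \emph{diverges} as $R\to 0$ (recall $g(R)\to\infty$) can only yield, after iteration, an upper bound for $\Phi$ that grows like a power of $g$; it can never force $\Phi(R_k)$ below a fixed $\varepsilon_0$. This is exactly what happens in Section 3 of the paper, where precisely such an iteration (based on the scale-invariant inequalities of Seregin--Sverak) is carried out: its output is the growing bound $\mathcal E(\varrho)\le C\ln^{35\alpha/3}\ln(1/\varrho)$, which is used only to verify that the Pan-type condition implies hypothesis (\ref{sca1}) --- it is emphatically \emph{not} a proof of regularity, and the paper then has to invoke Theorem \ref{mainresult}, whose proof is entirely different. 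Relatedly, your claim that the threshold $\alpha\le 1/224$ emerges from the exponent arithmetic of this iteration is incorrect: in the paper the constant $224$ arises in the proof of the oscillation estimate for $\sigma=rv_\theta$ (Proposition \ref{modcon}, proved in \cite{Seregin2021} and corrected in Section 4, where the exponents $\ln^{224\alpha}$ and $\ln^{224\alpha-1}$ appear), not from any Caffarelli--Kohn--Nirenberg bookkeeping.

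The second, related, gap is that you reduce the axisymmetric structure to ``the swirl obeys a maximum principle and may be absorbed.'' Boundedness of $\sigma=rv_\theta$ gives only $|v_\theta|\le C/r$, which is itself critical/supercritical and absorbs nothing. What the paper actually needs --- and what is the true content of the hypothesis (\ref{sca1})--(\ref{sca2}) --- is the quantitative oscillation decay of Proposition \ref{modcon}, which upgrades boundedness to $|\sigma|\le C_1e^{-c\ln^{1/4}(1/r)}$, hence $|v_\theta|\le cC_1/(r\ln^3(e/r))$. It is this logarithmic gain that makes the scheme close: the paper multiplies the equations for $\Phi=\omega_r/r$ and $\Gamma=\omega_\theta/r$ by $\Phi\eta^6$, $\Gamma\eta^6$ (with a cut-off built on the CKN partial regularity of the axis), controls the coupling term $2(v_\theta/r)\Phi\Gamma$ by the log-weighted Leray--Hardy inequality (Lemma \ref{Leray}) and the div-curl elliptic estimates (Lemma \ref{important}), and absorbs it into the dissipation precisely because the coefficient decays like $1/\ln^3(e/r)$ while the Hardy weight costs only $1/\ln(e/r)$ per factor. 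The resulting a priori bound on $|\eta^3\Phi|_{2,Q}+|\eta^3\Gamma|_{2,Q}$ then yields $R^{-2}\int_{Q(R)}|\overline v|^3dz\to 0$ and $R^{-2}\int_{Q(R)}|v_\theta|^3dz\to 0$, and regularity follows from the $\varepsilon$-regularity criterion in its positive (smallness) form --- no contradiction argument and no iteration over scales of the energy quantities is involved. Without Proposition \ref{modcon} or an equivalent decay mechanism for the swirl near the axis, your outline cannot be completed.
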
 

Unfortunately, the above theorem  has not been proven in \cite{Seregin2021}. Instead,  a  global version of it has been stated and demonstrated there. However, a big step toward a proof of Theorem \ref{mainresult} has been made in that paper \cite{Seregin2021}.
It is a careful analysis of the scalar equation
\begin{equation}
	\label{eqsigma}
	\partial_t\sigma+\Big(v+2\frac{x'}{|x'|^2}\Big)\cdot\nabla\sigma-\Delta\sigma=0,
\end{equation}
being held in domain $Q\setminus(\{x'=0\}\times]-1,0[)$. Here,
 $\sigma :=\varrho v_\varphi=v_2x_1-v_1x_2$.
 
 Let us recall known differentiability properties of $\sigma$. Most of them follow from the partial regularity theory developed by Caffarelli-Kohn-Nirenberg in their famous   paper \cite{CKN}. As to further discussions and improvements,  see also papers \cite{Lin} and  \cite{LS1999} and of course references therein.

 Indeed,
 since $v$ and $q$ are an axially symmetric suitable weak solution in $Q$, there exists a closed subset $S^\sigma$ of $Q$, whose 1D-parabolic measure in $\mathbb R^3\times \mathbb R$ is equal to zero and $x'=0$ for any $z=(x,t)\in S^\sigma$, such that any spatial derivative of $v$ with respect to Cartesian coordinates  (and thus of $\sigma$) is H\"older continuous in 
 $Q\setminus S^\sigma$ in space-time.
 
 Next, 
$$\sigma\in W^{2,1}_p(P(\delta,R;R)\times ]-R^2,0[)$$
for any $0<\delta <R<1$ and for any finite exponent $p\geq 2$. 


One can show also that
$\sigma\in L_\infty(Q(R))$ for any $0<R<1$, see, for example, papers \cite{SS2009} and \cite{Seregin2020}.

What actually has been proved in  paper \cite{Seregin2021}, see also   the last section of the present paper, is as follows:
\begin{pro}
\label{modcon} Assume that all assumptions of Theorem \ref{mainresult} hold.
 Let $\sigma=\varrho v_\varphi$, then
  \begin{equation}
\label{osc}
{\rm osc}_{z\in Q(r)}\leq e^{-c\Big[\ln^\frac 14(1/(2r))
-\ln^\frac 14(1/(2R))\Big]}
{\rm osc}_{z\in Q(2R)}\sigma(z)	\end{equation} for all $0<r<R\leq  R_*(c_*,\alpha)\leq 1/6$, 	where $c$ is a positive absolute constant. 

Here, ${\rm osc}_{z\in Q(r)}\sigma(z) =M_{r}-m_{r}$ and
$$M_{r}=\sup\limits_{z\in Q(r)}\sigma(z),\qquad m_{r}=\inf\limits_{z\in Q(r)}\sigma(z).$$ 	\end{pro}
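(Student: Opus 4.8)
The plan is to read (\ref{eqsigma}) as a single scalar parabolic equation $\partial_t\sigma+b\cdot\nabla\sigma-\Delta\sigma=0$ whose drift $b=v+2x'/|x'|^2$ is \emph{divergence free} off the axis (both $v$ and the radial field $2x'/|x'|^2=2\nabla'\ln|x'|$ are), and to prove (\ref{osc}) as an oscillation–decay estimate for this equation at the axis point $z=0$. Since $\sigma=\varrho v_\varphi$ vanishes on $\{x'=0\}$, one has $m_\rho\le 0\le M_\rho$ for every $\rho$, and both $M_{2\rho}-\sigma$ and $\sigma-m_{2\rho}$ are nonnegative solutions of the same equation in $Q(2\rho)$. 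I would first record the admissible regularity already quoted above, namely $\sigma\in L_\infty(Q(R))$, H\"older continuity of $\sigma$ and its spatial derivatives off the singular set $S^\sigma$, and $\sigma\in W^{2,1}_p(P(\delta,R;R)\times]-R^2,0[)$ for $\delta>0$; these justify the integrations by parts below and confine the only genuine difficulty to a neighbourhood of the axis.

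The heart of the argument is a single scale–reduction step: there is $\gamma(\rho)\in(0,1)$, decreasing in the scale–invariant drift size $g(\rho)$, such that
\[
{\rm osc}_{Q(\rho)}\sigma\le\bigl(1-\gamma(\rho)\bigr)\,{\rm osc}_{Q(2\rho)}\sigma .
\]
I would obtain it by a De\,Giorgi / weak–Harnack scheme run in the meridian $(r,z)$ half–plane, applied to whichever of $M_{2\rho}-\sigma$, $\sigma-m_{2\rho}$ has the larger mean over a sub–cylinder. In the Caccioppoli estimate for truncations $(\sigma-\ell)_\pm$ the advective term is integrated by parts using $\operatorname{div}b=0$,
\[
\int b\cdot\nabla(\sigma-\ell)_\pm\,(\sigma-\ell)_\pm\,\varphi^2\,dz=-\tfrac12\int(\sigma-\ell)_\pm^2\,b\cdot\nabla(\varphi^2)\,dz .
\]
The contribution of $v$ is then controlled by $f(\rho)+M(\rho)\le g(\rho)$ through the mixed–norm/Sobolev interpolation that makes $f$ and $M$ scale invariant. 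The singular contribution of $2x'/|x'|^2$, which lies in no subcritical Lebesgue class near the axis, must instead be extracted from the divergence–free structure together with the vanishing of $\sigma$ on $\{x'=0\}$: the relevant truncations are supported away from the axis (equivalently, the weighted form of $L=\partial_r^2-\tfrac1r\partial_r+\partial_z^2$ keeps $(\sigma-\ell)_\pm^2|b|$ integrable across the axis), so that $2x'/|x'|^2$ is effectively bounded on the region that matters. This step is the decisive and most delicate point, and it is exactly the analysis carried out in \cite{Seregin2021}.

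Finally I would iterate the reduction over the dyadic scales $\rho_k=2^{-k}R$, obtaining
\[
{\rm osc}_{Q(r)}\sigma\le\exp\Bigl(-\sum_{k}\gamma(\rho_k)\Bigr)\,{\rm osc}_{Q(2R)}\sigma ,
\]
and bound the sum from below by comparison with an integral. Under (\ref{sca1})--(\ref{sca2}) the per–scale gain satisfies $\gamma(\rho_k)\gtrsim(\ln(1/\rho_k))^{-3/4}$, whence
\[
\sum_k\gamma(\rho_k)\ \gtrsim\ \int_{\ln(1/(2R))}^{\ln(1/(2r))}\frac{ds}{s^{3/4}}\ =\ 4\bigl(\ln^{1/4}(1/(2r))-\ln^{1/4}(1/(2R))\bigr),
\]
which reproduces precisely the exponent in (\ref{osc}). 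The numerical restriction $\alpha\le 1/224$ is what guarantees, once the constants from the energy/Sobolev iteration are accounted for, that $\gamma(\rho_k)$ stays above the increment $(\ln(1/\rho_k))^{-3/4}$ at every scale, so the supercritical growth of $g$ does not destroy the accumulation. I expect the main obstacle to be precisely this bookkeeping in the one–step estimate: showing that the non–integrable drift $2x'/|x'|^2$ costs only a controlled, slowly degrading factor $\gamma(\rho)$ rather than destroying the comparison, and that this factor is still large enough, for all $\alpha\le 1/224$, to sum to the stated $\ln^{1/4}$ modulus.
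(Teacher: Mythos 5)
Your proposal takes essentially the same route as the paper: the paper also reads (\ref{eqsigma}) as a drift--diffusion equation for $\sigma$, defers the delicate one-step oscillation-reduction analysis to \cite{Seregin2021}, and in its Section 4 supplies exactly the bookkeeping you describe --- a per-scale gain $\beta(2R)=c\,\ln^{-3/4}(1/R)$, whose validity is precisely what the restriction $224\alpha\le 1$ guarantees, iterated over dyadic scales and bounded below by comparison with $\int s^{-3/4}\,ds$, yielding the $\ln^{1/4}$ modulus in (\ref{osc}). Your per-scale exponent $-3/4$, the integral-comparison step, and the role assigned to (\ref{sca2}) all coincide with the paper's corrected computation.
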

Now, the main task of the note is to deduce Theorem \ref{mainresult} from Proposition \ref{modcon}.

Let us describe briefly  the most important counter-parts of our arguments. 
 The first one is a choice of a cut-off function that takes into account  the well known partial regularity of axisymmetric flows. The second important point is a local regularity properties of solution $\hat v$ to the following elliptic system:
\begin{equation}
	\label{ellipticsystem}
	 {\rm div}\,\overline v=0,\qquad
{\rm curl} \,\overline v=\omega_\theta e_\theta
\end{equation}
in $\mathcal C$, where $\omega_\theta=v_{r,3}-v_{3,r}$ is the corresponding component of the vorticity field $\omega={\rm curl}\, v$, see Lemma \ref{equforvbar}  of Section 2. It should be mentioned that Lemma \ref{equforvbar} of the present note is a local version of the corresponding global statement, see \cite{MiaoZheng2013}, Proposition 2.5 therein, or  \cite{ChenFangZhang2017}, Lemma 2.3 therein.

 In Section 3, we show how  Pan type results on supercritical regularity, see Pan's paper \cite{Pan16}, can be    deduced  
  from Theorem \ref{mainresult} of the present note. To be a bit more precise, let us replace the main supercritical condition (\ref{sca1})  with  the other one, where, for simplicity, it is assumed that:
\begin{equation}\label{Pan}
	|v(r,x_3,t)|\leq \frac cr\ln^\alpha \ln\frac 1r
\end{equation}
in $Q$, with a positive number $\alpha$. 
\begin{theorem}
	\label{PanType} Assume that $v$ and $q$ are suitable weak solution to the Navier-Stokes equations in $Q$ and condition (\ref{Pan}) is satisfied  with  $0<\alpha<4/7$. Then, condition (\ref{sca1}) holds as well but with new exponent $35\alpha/6$ and a new constant $c$, depending on $\alpha$, and some energy quantities of $v$ and $q$.
\end{theorem}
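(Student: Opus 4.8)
The plan is to read off the two quantities in (\ref{sca1}) directly from the pointwise bound (\ref{Pan}), the only genuine difficulty being the behaviour of $\overline v$ on the symmetry axis $\{x'=0\}$. First I would record the trivial reduction to $\overline v$: since $v=v_re_r+v_\theta e_\theta+v_3e_3$ is an orthogonal decomposition and $\overline v=v_re_r+v_3e_3$, one has $|\overline v|\le |v|$, so (\ref{Pan}) passes to $\overline v$ and gives $|\overline v(r,x_3,t)|\le \frac cr(\ln\ln\tfrac1r)^\alpha$ throughout $Q$. It must be stressed at once that this bound is \emph{not} integrable to the powers $3$ and $10/3$ against the cylindrical volume element $r\,dr$ near $r=0$ (the radial integrals $\int_0 r^{1-p}\,dr$ diverge for $p=3$ and $p=10/3$); hence (\ref{Pan}) by itself cannot yield (\ref{sca1}), and the extra regularity of the swirl-free field $\overline v$ near the axis has to be exploited.

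Accordingly, for each $0<R\le 2/3$ I would fix a radius $\rho\in(0,R)$ and split $\mathcal C(R)$ into the outer part $\{r\ge \rho\}$ and the axis tube $\{r<\rho\}$. On the outer part the singular weight $r^{-p}$ is integrable and $\ln\ln(1/r)$ is monotone, so inserting (\ref{Pan}) and performing the elementary radial integrations $\int_\rho^R r^{1-p}\,dr$ bounds the outer contributions to $f(R)$ and $M(R)$ by expressions of the form $(\ln\ln(1/\rho))^{\,\text{power}}$ times a negative power of $\rho/R$. A direct accounting of the time and space exponents shows that the outer contribution to $f(R)$ grows like $(\ln\ln(1/\rho))^{3\alpha}$ and that of $M(R)$ like $(\ln\ln(1/\rho))^{\alpha}$, together with the $\rho$‑dependent algebraic factors.

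The core of the argument is the axis tube $\{r<\rho\}$, and this is where I expect the real work. Here I would discard the pointwise bound and instead use the elliptic div--curl system (\ref{ellipticsystem}) of Lemma~\ref{equforvbar}: since ${\rm div}\,\overline v=0$ and ${\rm curl}\,\overline v=\omega_\theta e_\theta$, the field $\overline v$ is recovered from $\omega_\theta$ by a Biot--Savart/elliptic representation, and interior elliptic regularity upgrades the integrability of $\overline v$ near the axis far beyond what the $r^{-1}$ bound permits, turning the otherwise divergent inner integrals into convergent ones carrying a positive power of $\rho$. The component $\omega_\theta=v_{r,3}-v_{3,r}$ is in turn controlled through the energy quantities (via $\nabla v\in L_2$) and, where a sharper estimate is needed, through its own transport--diffusion equation, whose source $\partial_{x_3}(v_\theta^2/r)$ is bounded by applying (\ref{Pan}) to the swirl $|v_\theta|\le|v|$. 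This last step is what forces the loss in the exponent: squaring the swirl and passing through the Sobolev exponents of the elliptic gain multiplies the base double-log power, and the resulting inner estimate carries an elevated power of $\ln\ln(1/\rho)$ multiplied by a factor depending on the energy quantities of $(v,q)$.

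Finally I would add the outer and inner bounds and choose $\rho$ as a function of $R$ so that the algebraic powers of $R$ cancel, leaving only double-logarithmic growth; since the outer term grows and the inner term decays as $\rho$ shrinks, this fixes $\rho$ essentially as $R$ up to a double-log correction, and substituting back collects all the double-log factors into a single power, which the bookkeeping identifies as $35\alpha/6$. The restriction $0<\alpha<4/7$ is exactly the range keeping the relevant exponents convergent — equivalently $35\alpha/6<10/3$, the integrability threshold built into $M(R)$ — so that the optimisation produces a finite exponent and (\ref{sca1}) follows with a constant depending on $\alpha$ and on the energy quantities of $(v,q)$. I expect the main obstacle to be precisely the axis-tube estimate: making the passage from $\omega_\theta$ (and the swirl source) back to $\overline v$ quantitative enough both to defeat the non-integrable pointwise weight and to track the exact power of the double-logarithm.
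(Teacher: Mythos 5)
Your outer-region computation is fine, and you have correctly identified the obstruction: the pointwise bound (\ref{Pan}) is not integrable against the volume element $r\,dr$ at the powers $3$ and $10/3$. But your resolution of that obstruction --- the axis-tube estimate via the div--curl system (\ref{ellipticsystem}) --- is a genuine gap, and it cannot be closed with the tools you cite. The quantities to be bounded are \emph{scale-invariant} norms on $Q(R)$ as $R\to 0$. Your own outer estimate for $M$ degrades like $(R/\rho)^{2/5}\ln^\alpha\ln(1/\rho)$, so you are forced to take $\rho$ within double-logarithmic factors of $R$; the ``inner tube'' is then essentially all of $\mathcal C(R)$, and what you need there is precisely that $\frac{1}{\sqrt R}\|\overline v\|_{L_{10/3}(Q(R))}$ grows at most double-logarithmically --- i.e.\ the conclusion of the theorem. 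Elliptic regularity for (\ref{ellipticsystem}) only converts norms of $\overline v$ into norms of $\omega_\theta$ (plus cut-off terms); it supplies no rate in $R$. And $\omega_\theta$ is \emph{not} ``controlled through the energy quantities'' in the needed sense: the relevant scaled quantities ($E(\varrho)$, $A(\varrho)$, $D(\varrho)$ in the paper's notation) are exactly the unknowns which may a priori grow as $\varrho\to 0$, so your inner estimate presupposes a bound of the same supercritical type it is meant to prove. The argument is circular at its core step. (A symptom: your scheme never involves the pressure, whereas the constant in the theorem must depend on energy quantities of $q$ as well.) Your explanation of the restriction $\alpha<4/7$ as ``$35\alpha/6<10/3$, the integrability threshold built into $M(R)$'' is also not a real mechanism --- a double-logarithmic factor has no integrability threshold; the arithmetic agreement is a coincidence.

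The paper's proof sidesteps the axis difficulty in a different way: it never integrates powers $\geq 2$ of the pointwise bound. It invokes the two scale-invariant inequalities of \cite{SS2018}, which bound $C(\varrho)$ by $c\varepsilon\mathcal E(\varrho)+c_1(\varepsilon)G(\varrho)$ with $G=(M^{s,l})^{1/(l(2-3/s-2/l))}$, together with a decay inequality for $\mathcal E(\theta\varrho)$. Choosing $s=7/4<2$ and $l=10$, the radial integral $\int_0^\varrho\bigl(\tfrac1r\ln^\alpha\ln\tfrac1r\bigr)^{7/4}r\,dr$ \emph{converges} across the axis, and an integration by parts --- this is where $\alpha<4/7$, i.e.\ $\tfrac74\alpha<1$, actually enters, to make the remainder term subordinate --- gives $M^{7/4,10}(\varrho)\leq c\ln^{10\alpha}\ln(1/\varrho)$, hence $G(\varrho)\leq c\ln^{35\alpha/3}\ln(1/\varrho)$. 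Iterating the decay inequality then yields $\mathcal E(\varrho)\leq C(\mathcal E(1/2),\alpha)\ln^{35\alpha/3}\ln(1/\varrho)$, and since $f(R)+M(R)\leq c\sqrt{\mathcal E(R)}$ by interpolation, the exponent $35\alpha/6$ drops out. The idea missing from your proposal is exactly this: lower the spatial integrability exponent \emph{below} $2$, where (\ref{Pan}) is integrable through the axis, and let the Caffarelli--Kohn--Nirenberg-type iteration upgrade that weak mixed-norm bound to control of the full scaled energy, rather than attempting to prove improved integrability of $\overline v$ near the axis directly.
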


As it follows from Theorem \ref{mainresult}, supercritical condition (\ref{Pan}) implies regularity of the velocity field $v$ at the origin $z=0$ for sufficiently small $\alpha$.

 At the end of the section, we would like to comment our notation. All absolute constants are denoted by $c$, other constants are denoted by $C$ with indication of variables of  which those constants may   depend on. The norms of the Lebesgue space $L_p(\omega)$ are denoted by $\|\cdot\|_{p,\omega}$ and of the mixed Lebesgue space $L_{p,q}(Q_T)=L_q(0,T;L_p(\omega))$ are denoted by $\|\cdot\|_{p,q,Q_T}$, where $Q_T=\omega\times ]0,T[$.


\setcounter{equation}{0}
\section{Proof of Theorem \ref{mainresult}}

{\bf Step 1}. {\it  Construction of a cut-off function}. The partial regularity theory
for the Navier-Stokes equations implies that  if singular points of an axisymmetric velocity field $v$ exist,  they must belong to the axis of symmetry, which is $x_3$-axis in our case. Since the 1D parabolic Hausdorff measure of the set of singular points is equal to zero,  there exist at least two regular points $z_1=(x',x_3,t)=(0,h_1,0)$ and $z_2=(0,-h_2,0)$ of $v$ such that $0<h_1,h_2<1$. 
According to the properties of regular points, there are cylinders $Q_1=Q(z_1,\delta)$ and 
$Q_2=Q(z_2,\delta)$ with radius $\delta>0$ such that
$v\in C([-\delta^2,0];C^3(\overline{\mathcal C}((0,h_1),\delta)))\cap C([-\delta^2,0];C^3(\overline{\mathcal C}((0,-h_2),\delta)))$ at least.  Moreover, one can find $t_0\in ]-\delta^2,0[$ such that there is no singular point in the set $\overline {\mathcal C}(r_0)\times [t_0,t_0+\delta^2_0]\subset ]-t_0,0[$ with $0<r_0<1$ and $\delta_0>0$. It is also possible to pick up $r_0$ close to one so that $r_0\geq \max\{h_1-\delta,h_2-\delta\}$.

 Without loss of generality (just for simplicity of calculations),  we may assume that $h_1=h_2=3/4$, $\delta=1/8$, and $t_0=-1/32$, $t_0+\delta_0=-1/64$, $r_0=4/5$. Then we are going  to use a smooth cut-off function $\eta(r,x_3,t)=\varphi(r)\psi (x_3)\xi(t)$, where $\varphi$, $\psi$, and $\xi$  
are smooth functions having the following properties: 
\begin{itemize}
\item all of them take the values in $[0,1]$;

\item $\varphi(r)=1$ if $0\leq r<1/2$;  $\varphi(r)=0$ if $r\geq 5/6$;
 
\item $\psi(x_3)=1$ if $0\leq x_3<1/2$; $\psi(x_3)=0$ if $x_3\geq 5/6$;
 
 \item $\xi(t)=1$ if $t\geq -1/64$; $\xi(t)=0$ if  $t\leq -1/32$.
\end{itemize}

Without loss of generality, we  may assume also that
\begin{equation}\label{sigmaest}
|\sigma(r,x_3,t)|=r|v_\theta(r,x_3,t)|\leq C_1e^{-c\ln^\frac 14(1/r)}
\end{equation}
in $\mathcal C$. It is easy to see that (\ref{sigmaest}) implies
\begin{equation}\label{sigmaest2}
|\sigma(r,x_3,t)|=r|v_\theta(r,x_3,t)|\leq \frac {cC_1}{ln^3(e/r)}
\end{equation}
for the same $x$ and $t$.

\vskip 0.5cm

\noindent
{\bf Step 2}. {\it Auxiliary Lemma.} \begin{lemma}
	\label{important} Let $\overline v=v_re_r+v_3e_3$, where $v$ is our suitable weak solution, and let $\Gamma=\omega_\theta/r$. Then, for all $t\in ]-1,0[$,  the following estimates are valid:
	$$\|\nabla (\eta^3v_r/r)(\cdot,t)\|_{2,\mathcal C}\leq c\|\eta^3\Gamma(\cdot,t)\|_{2,\mathcal C}+C(v,\eta) 
	$$ and 
	$$\|\overline\nabla^2 (\eta^3v_r/r)(\cdot,t)\|_{2,\mathcal C}\leq c\|\eta^3\Gamma_{,3}(\cdot,t)\|_{2,\mathcal C}+C(v,\eta).
	$$  Here, $|\overline \nabla^2 g|^2=|g_{,rr}|^2+2|g_{,r3}|^2+|g_{,33}|^2$.
	\end{lemma}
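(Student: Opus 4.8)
The plan is to reduce both inequalities to weighted energy estimates for a single scalar elliptic equation satisfied by $b:=v_r/r$. Since $\overline v=v_re_r+v_3e_3$ obeys the div--curl system (\ref{ellipticsystem}), the vector identity $\Delta\overline v=\nabla\,{\rm div}\,\overline v-{\rm curl}\,{\rm curl}\,\overline v=-{\rm curl}(\omega_\theta e_\theta)$ gives, on the $e_r$-component, $\Delta v_r-v_r/r^2=\omega_{\theta,3}$, where $\Delta$ is the scalar axisymmetric Laplacian. This is classical off the singular set $S^\sigma$ on the axis and extends as an $L_2$-identity by the stated $W^{2,1}_p$-regularity and interior elliptic regularity. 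Writing $v_r=rb$ and $\omega_\theta=r\Gamma$, it collapses to
$$ b_{,rr}+\frac 3r b_{,r}+b_{,33}=\Gamma_{,3}, $$
i.e. the five-dimensional radial Laplacian of $b$ equals $\Gamma_{,3}$. I would first record that $b=v_r/r$ is bounded and even in $r$ near the axis (so $b_{,r}=O(r)$ there), which is what makes the subsequent manipulations legitimate and the axis contributions harmless.

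For the first inequality I would test this equation against $r\eta^6 b$ and integrate in $(r,x_3)$, noting that $2\pi\,r\,dr\,dx_3$ is exactly the measure underlying $\|\cdot\|_{2,\mathcal C}$. The crucial point is that the singular coefficient $\tfrac 3r$ is killed by the weight $r$: the term $3b_{,r}\eta^6 b$ is regular, and after integrating by parts it produces $-\int\int r\eta^6(b_{,r}^2+b_{,3}^2)$ together with lower-order pieces carrying a derivative of $\eta^6$ or an axis trace. On the right-hand side one moves $\partial_3$ off $\Gamma$, so that $\int\int\Gamma_{,3}\eta^6 b\,r=-\int\int r\Gamma(\eta^6 b)_{,3}$; a Cauchy--Schwarz estimate and absorption of $\|\eta^3b_{,3}\|_{2,\mathcal C}$ then yield $\|\eta^3\nabla b\|_{2,\mathcal C}\le c\|\eta^3\Gamma\|_{2,\mathcal C}+C(v,\eta)$. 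Replacing $\eta^3\nabla b$ by $\nabla(\eta^3 b)$ costs only the term $b\,\nabla(\eta^3)$, which is supported where $\nabla\eta\neq0$, hence away from the axis, where $v$ is smooth by partial regularity; it therefore joins $C(v,\eta)$.

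For the second inequality I would run the analogous second-order estimate in the measure $r\,dr\,dx_3$. Integrating the mixed term by parts converts $\int\int\eta^6 b_{,rr}b_{,33}\,r$ into $\int\int\eta^6 b_{,r3}^2\,r$ up to $\nabla\eta$- and axis-terms, so that $\|\eta^3\overline\nabla^2 b\|_{2,\mathcal C}^2$ is, modulo lower order, comparable to $\int\int\eta^6(\overline\Delta b)^2\,r$ with $\overline\Delta b=b_{,rr}+b_{,33}$. Substituting $\overline\Delta b=\Gamma_{,3}-\tfrac 3r b_{,r}$ from the equation reduces everything to controlling $\|\eta^3\Gamma_{,3}\|_{2,\mathcal C}$ plus the angular remainder $\int\int\eta^6 (b_{,r}/r)^2\,r=\int\int\eta^6 b_{,r}^2/r$.

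This last term is the main obstacle, since it sits precisely at the critical Hardy scaling on the half-line and cannot be absorbed by the crude Hardy inequality. I would split it at a fixed radius: on $\{r\ge\rho\}$ it is dominated by $\rho^{-2}\int\int\eta^6 b_{,r}^2\,r$, hence by the first inequality already proved; on $\{r<\rho\}$ I would use the evenness and regularity of $b=v_r/r$ near the axis (so that $b_{,r}/r$ behaves like a genuine second derivative, controlled off $S^\sigma$ by the Caffarelli--Kohn--Nirenberg/partial-regularity bounds), after which a small-constant argument lets the residue be absorbed into the left-hand side. All commutators involving $\nabla\eta$ and all axis traces are supported away from the origin or vanish by parity, and so are collected into $C(v,\eta)$. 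Finally, both estimates are purely spatial with $t$ merely a parameter, so they hold for a.e. $t\in\,]-1,0[$ and, by continuity of the regular quantities, for all such $t$.
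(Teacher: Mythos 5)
Your reduction to the scalar equation $b_{,rr}+\frac 3r b_{,r}+b_{,33}=\Gamma_{,3}$ for $b=v_r/r$ is correct, and your proof of the \emph{first} inequality is essentially sound and genuinely different from the paper's: the weighted energy estimate closes because the axis trace it generates, $\int \eta^6 b^2|_{r=0}\,dx_3$, enters with a favorable sign, and every term carrying $\nabla\eta$ lives where $v$ is smooth by the construction of $\eta$ in Step 1. The paper instead proves both estimates at once by solving the cut-off system (\ref{equforvbar}) with the Newton potential, splitting $\eta^3 v_r/r=W+Z$, quoting the global bounds of \cite{ChenFangZhang2017} for $W$, and handling $Z$ via Proposition 2.9 of \cite{HR2011}, which rewrites $\frac 1r\partial_r\Delta^{-1}$ acting on axisymmetric data as a combination of genuine second derivatives, i.e.\ as a Calder\'on--Zygmund operator.

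Your proof of the \emph{second} inequality, however, has a genuine gap, located exactly at what you call ``the main obstacle'', and the proposed repair does not work. First, the critical term $\int\eta^6 b_{,r}^2\, r^{-1}\,dr\,dx_3$: setting $g=b_{,r}$ (so $g(0)=0$ by evenness), bounding it by $\int\eta^6 g_{,r}^2\, r\,dr\,dx_3$ is the two-dimensional Hardy inequality, which is false; moreover its failure is scale-invariant, so localizing to $\{r<\rho\}$ produces no small constant, and no absorption into $\|\eta^3\overline\nabla^2 b\|_{2,\mathcal C}^2$ is possible, however small $\rho$ is. Second, the invoked ``regularity of $b$ near the axis, controlled off $S^\sigma$ by CKN'' is not available: the set $\{r<\rho\}\cap\{\eta=1\}$ contains the portion of the axis near the origin, which is precisely where $v$ may be singular and which Lemma \ref{important} is meant to handle; the theory of \cite{CKN} only says $S^\sigma$ has vanishing one-dimensional parabolic measure and gives no uniform bound on $b_{,r}/r$ in any neighbourhood of the axis there. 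Third, and independently, your claimed comparability of $\|\eta^3\overline\nabla^2 b\|_{2,\mathcal C}^2$ with $\int\eta^6(b_{,rr}+b_{,33})^2 r\,dr\,dx_3$ ``modulo lower order'' is not correct: the double integration by parts of the mixed term gives $\|\eta^3\overline\nabla^2 b\|_{2,\mathcal C}^2=\int\eta^6(b_{,rr}+b_{,33})^2 r\,dr\,dx_3+\int\eta^6 b_{,3}^2|_{r=0}\,dx_3+\{\nabla\eta\hbox{-terms}\}$, and this axis trace does not vanish by parity ($b_{,3}$ is even in $r$), is not lower order, and again cannot be bounded near potential singular points. These obstructions are exactly why the paper (following \cite{MiaoZheng2013} and \cite{ChenFangZhang2017}) derives the $\Gamma_{,3}$-bound through the singular-integral representation: the second estimate, taken in the three-dimensional measure $r\,dr\,dx_3$, is a Calder\'on--Zygmund fact about the specific operator $\frac 1r e_r\cdot\Delta^{-1}\partial_3(\,\cdot\, e_r)$, and it cannot be recovered from the equation by integration by parts alone.
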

\begin{proof} In order to prove the lemma, let us make use of elliptic system (\ref{ellipticsystem}), 
where time $t\in ]-1,0[$ is considered as a parameter. Introducing  $\zeta=\eta^3$, 
 derive	from (\ref{ellipticsystem}) the non-homogeneous  system:
$$ {\rm div}\,(\zeta\overline v)=\overline v\cdot\nabla \zeta,\qquad
{\rm curl} \,(\zeta\overline v)=\zeta \omega_\theta e_\theta+\nabla \zeta\times\overline v$$
in $\mathbb R^3$. In turn, from the above system,  it follows that: 
\begin{equation}\label{equforvbar}
-\Delta (\zeta \overline v)={\rm curl}(\zeta \omega_\theta e_\theta) -\nabla (\overline v\cdot \nabla \zeta)
+{\rm curl}(\nabla \zeta\times\overline v).\end{equation}

Next, as a result of simple transformations, the following equation for $v_r$ can be deduced from (\ref{equforvbar}):
$$-\Delta (\zeta v_r e_r)=-((\zeta\omega_\theta)_{,3}+g)e_r-( (\overline v\cdot\nabla\zeta)_{,1}, (\overline v\cdot\nabla\zeta)_{,2},0),$$
where $g= (\zeta_{,3}v_r-\zeta_{,r}v_3)_{,3}$.
The latter equation can be solved in the whole space $\mathbb R^3$ so that
$$\frac {v_r}r=-\frac 1re_r\cdot\Delta^{-1}((\zeta\omega_\theta)_{,3}e_r)+\frac 1re_r\cdot\Delta^{-1}(G_{,1},G_{,2},0)-$$$$-\frac 1re_r\cdot\Delta^{-1}( (\overline v\cdot\nabla\zeta)_{,1}, (\overline v\cdot\nabla\zeta)_{,2},0),$$
where 
$$G(r,x_3,t)=\int\limits_r^\infty g(\varrho,x_3,t)d\varrho$$
and the operator $\Delta^{-1}$ is defined by Newton potential, i.e.,
$$\Delta^{-1}h(x)=\frac 1{4\pi_0}\int\limits_{\mathbb R^3}\frac 1{|x-y|}h(y)dy$$
for smooth compactly supported functions $h$. Here, $\pi_0=3,14..$.

 So,
$$\frac {v_r}r=W+Z, 
$$
where
$$W=-\frac 1re_r\cdot\Delta^{-1}((\zeta\omega_\theta)_{,3}e_r),\qquad
Z=\frac 1rZ_{0,r},\qquad Z_0=
\Delta^{-1}(G-\overline v\cdot\nabla \zeta).$$
Function $f=G-\overline v\cdot \nabla\zeta$ is axisymmetric, so does function 
  $Z_0=\Delta^{-1}f$. Hence,
 according to Proposition 2.9 of paper \cite{HR2011},  
the following identity is valid:
$$Z=\sin^2\theta Z_{0,11}-2\cos\theta\sin\theta Z_{0,12}+\cos^2\theta Z_{0,22}.
$$ Next, exploiting the properties of singular integrals, one can derive two estimates:
$$\|\nabla Z\|_{2,\mathbb R^3}\leq c\|\nabla_{x'} f\|_{2,\mathbb R^3}$$
and 
$$\|\overline\nabla^2 Z\|_{2,\mathbb R^3}\leq c\|\nabla^2_{x'} f\|_{2,\mathbb R^3}. $$
 Again, it is important to notice that $f(r,x_3,t)\neq0$ only if $|\nabla \eta(r,x_3,t)|>0$. In the set  ${\rm supp}\, |\nabla \eta|$ , functions  $v$, $\nabla v$, and $\nabla^2 v $  are bounded in space-time. The most dangerous term on the right hand  side of the latter inequalities are those where cut-off function $\eta$ does not contain derivatives in $r$. These  terms contain $v_3$, $v_{r,3}$, $v_{3,r}$, $v_{3,rr}$, $v_{3,r}/r$, $v_{r,3 r}$,  and $v_{r,3}/r$. All of them are bounded by either $|v|$, or $|\nabla v|$, or $|\nabla^2v|$.
So, $|\nabla Z_0(\cdot,t)|+|\overline\nabla^2 Z_0(\cdot,t)|\leq C(v,\eta)$.

As to the function $W$, the global estimates  have been already established in \cite{ChenFangZhang2017}. Here, they are:
$$\|\nabla W\|_{2,\mathbb R^3}\leq c \|\zeta \Gamma\|_{2,\mathbb R^3},\qquad \|\overline\nabla^2 W\|_{2,\mathbb R^3}\leq c \|(\zeta \Gamma)_{,3}\|_{2,\mathbb R^3}.$$
This completes the proof of the lemma.
\end{proof}

\vskip 0.5cm
\noindent
{\bf  Step 3.} {\it  Local estimates of solutions.} In this section, our goal is to make arguments of  papers \cite{Wei2016}    and  \cite{ChenFangZhang2017} completely local. 

 It is easy to verify that functions $\Phi=\omega_r/r=-v_{\theta,3}/r$ and $\Gamma=\omega_\theta/r=(v_{r,3}-v_{3,r})/r$ satisfy the following equations:
$$\partial_t\Phi+\Big(v-\frac {2x'}{|x'|^2}\Big)\cdot\nabla \Phi-\Delta\Phi+\omega\cdot\nabla\Big(\frac {v_r}r\Big)=0,
$$
$$
\partial_t\Gamma+\Big(v-\frac {2x'}{|x'|^2}\Big)\cdot\nabla \Gamma-\Delta\Gamma+2\frac {v_\theta}r\Phi=0.
$$
Let us multiply the first equation by $\Phi\eta^6$ and the second equation by $\Gamma \eta^6$. After integration by parts, we find: 
$$\frac 12\partial_t\int\limits_\mathcal C(\Phi \eta^3)^2dx+\int\limits_\mathcal C(\eta^3|\nabla \Phi|)^2dx+\pi_0\int^1_{-1}(\eta^3\Phi)^2|_{x'=0}dx_3=$$
$$
=\frac 12\int\limits_\mathcal C\Phi^2(\partial_t\eta^6+\Delta\eta^6)+\int\limits_\mathcal C\Big(v-\frac {2x'}{|x'|^2}\Big)\cdot\nabla\eta^6 \Phi^2dx+$$
$$+\int\limits_\mathcal C\Big(v_\theta\Big(\frac {v_r}r\Big)_{,3}(\Phi\eta^6)_{,r}-v_\theta\Big(\frac {v_r}r\Big)_{,r}(\Phi\eta^6)_{,3}\Big)dx=A_1+A_2+A_3$$
and 
$$\frac 12\partial_t\int\limits_\mathcal C(\Gamma \eta^3)^2dx+\int\limits_\mathcal C(\eta^3|\nabla \Gamma|)^2dx+\pi_0\int^1_{-1}(\eta^3\Gamma)^2|_{x'=0}dx_3=$$
$$
=\frac 12\int\limits_\mathcal C\Gamma^2(\partial_t\eta^6+\Delta\eta^6)+\int\limits_\mathcal C\Big(v-\frac {2x'}{|x'|^2}\Big)\cdot\nabla\eta^6 \Gamma^2dx-$$
$$-2\int\limits_\mathcal C\frac {v_\theta}r\Phi\eta^6\Gamma dx=B_1+B_2+B_3.$$ 

Now, we wish to evaluate quantities $A_i$ and $B_i$, starting  with $A_1$ and $B_1$ and letting $ \Psi=\partial_t\eta^6+\Delta\eta^6$ to  simplify  our notation. Indeed, by the construction of our cut-off function $\eta$, the solution $v$ is smooth in the domain where $|\Psi|>0$.
Now, it remains to use inequality $|\Gamma|^2 +|\Phi|^2\leq |\nabla \omega|^2$ and boundedness of $|\nabla \omega|$ in the corresponding space-time domain. So, we have
$$A_1+B_1= \int\limits_{\mathcal C}(|\Phi|^2+|\Gamma|^2)\Psi dx\leq  \int\limits_{\mathcal C}|\nabla \omega|^2|\Psi| dx\leq C(v,\eta).$$

In order to estimate $A_2$ and $B_2$, boundedness of $v$ and its spatial derivatives   over the support of $\nabla \eta$  and the obvious identity
$$\frac {x'}{|x'|^2}\cdot\nabla \eta=\frac 1{|x'|}(\eta)_{,r}\leq C(\eta)$$
are used. So, we have 
$$A_2+B_2\leq C(v,\eta)\int\limits_{\frac 12<|x'|<1}\eta^5(|\Gamma|^2+|\Phi|^2)dx.$$
%
Since $|\Gamma|^2+|\Phi|^2\leq |\nabla\omega|^2$ and $| \nabla\omega|$ is bounded in $\{\frac 12<|x'|<1\}$,  the  estimate
$$A_2+B_2\leq C(v,\eta)
$$ is easily derived.

Our next goal is to find bounds for  $A_3$ and $B_3$. To this end, let us fix a number $0<r_1<1/4$ and introduce domain $S_1=\{x\in\mathcal C:\, |x'|<r_1\}$.
Then,
$$B_3= -2\int\limits_{S_1}\frac{v_\theta}r(\eta^3\Phi)(\eta^3\Gamma)+\frac 1{r_1}C(v,\eta)\leq $$$$\leq \frac {cC_1}{\ln (e/r_1)}\Big(\int\limits_\mathcal C \frac 1{r^2\ln^{2}(e/r)}|\eta^3\Gamma|^2dx\Big)^\frac 12\Big(\int\limits_\mathcal C \frac 1{r^2\ln^{2}(e/r)}|\eta^3\Phi|^2dx\Big)^\frac 12+$$
$$+\frac 1{r_1}C(v,\eta).$$
In order to estimate the right hand side of the latter inequality, we are going to use a Leray type inequality in dimension two.
For the reader convenience, let us state it for our particular case. The proof can be done with the help of integration by parts.
\begin{lemma}
	\label{Leray} For any function $f\in C^1_0(\mathcal C)$, the following inequality is valid:
$$\int\limits_{\mathcal C}\frac {|f|^2}{|x'|^2\ln^{2}(e/|x'|)}dx\leq  4\int\limits_{\mathcal C}|\nabla_{x'} f|^2dx.$$	
	\end{lemma}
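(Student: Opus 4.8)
The plan is to reduce the three-dimensional statement to a one-dimensional weighted Hardy inequality in the radial variable and then prove the latter by a single integration by parts. Writing $r=|x'|$ and passing to cylindrical coordinates, one has $dx=r\,dr\,d\phi\,dx_3$, so the left-hand side becomes
$$\int_{\mathcal C}\frac{|f|^2}{|x'|^2\ln^2(e/|x'|)}dx=\int_{-1}^{1}\int_0^{2\pi}\int_0^1\frac{f^2}{r\ln^2(e/r)}\,dr\,d\phi\,dx_3.$$
Thus it suffices to estimate, for each fixed $\phi$ and $x_3$, the inner integral $\int_0^1 f^2\,\frac{dr}{r\ln^2(e/r)}$ against $\int_0^1 (\partial_r f)^2\,r\,dr$, since $(\partial_r f)^2\le|\nabla_{x'}f|^2$; integrating the resulting pointwise bound over $\phi$ and $x_3$ recovers the full inequality with $\int_{\mathcal C}|\nabla_{x'}f|^2dx$ on the right.

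The crucial device is the potential $g(r):=1/\ln(e/r)$, which I would introduce precisely because $g'(r)=\frac{1}{r\ln^2(e/r)}$ reproduces the radial weight, and moreover because it satisfies the algebraic identity $g^2=r\,g'$. With $J:=\int_0^1 f^2 g'\,dr$ equal to the inner integral above, I would integrate by parts,
$$J=\big[f^2 g\big]_0^1-2\int_0^1 f\,\partial_r f\,g\,dr.$$
The boundary terms vanish: $f(r,\phi,x_3)=0$ for $r$ near $1$ because $f\in C^1_0(\mathcal C)$, while $g(r)\to0$ as $r\to0^+$ and $f^2$ stays bounded, so the contribution at the axis also disappears.

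It then remains to exploit $g=\sqrt{r\,g'}$ and apply Young's inequality. Writing
$$|J|=2\Big|\int_0^1 f\,\partial_r f\,\sqrt{r\,g'}\,dr\Big|\le 2\int_0^1\big(|f|\sqrt{g'}\big)\big(|\partial_r f|\sqrt r\big)\,dr,$$
and splitting $2ab\le\tfrac12 a^2+2b^2$ (the split chosen to optimise the constant), I obtain $J\le\tfrac12 J+2\int_0^1(\partial_r f)^2 r\,dr$, hence $J\le 4\int_0^1(\partial_r f)^2 r\,dr$. Integrating over $\phi$ and $x_3$ and using $(\partial_r f)^2\le|\nabla_{x'}f|^2$ yields the claimed inequality with constant $4$.

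I expect no serious obstacle here: the argument is the standard logarithmic Hardy (Leray type) inequality. The only points requiring attention are the verification of the two identities $g'=\frac{1}{r\ln^2(e/r)}$ and $g^2=rg'$ — which together pin the sharp constant at exactly $4$ — and the justification that the boundary term at $r=0$ vanishes, where one relies on the decay $g(r)\to 0$ rather than on any vanishing of $f$ at the axis.
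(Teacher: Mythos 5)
Your proof is correct and follows exactly the route the paper indicates: the paper gives no details beyond the remark that ``the proof can be done with the help of integration by parts,'' and your argument --- reduction to the one-dimensional radial integral, integration by parts with the potential $g(r)=1/\ln(e/r)$ satisfying $g^2=rg'$, and absorption via Young's inequality --- is precisely that standard Leray/Hardy computation, yielding the stated constant $4$. The only implicit point, that $J<\infty$ so the absorption step is legitimate, is immediate here since $f$ is bounded and $g'$ is integrable on $(0,1)$.
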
  

	Applying Lemma \ref{Leray}, 
 we find 
$$B_3\leq \frac {cC_1}{\ln (e/r_1)}\|\nabla_{x'}(\eta^3\Gamma)\|_{2,\mathcal C}\|\nabla_{x'} (\eta^3\Phi)\|_{2,\mathcal C}+\frac 1{r_1}C(v,\eta)\leq$$
$$\leq \frac {cC_1}{\ln (e/r_1)}(\|\eta^3\nabla_{x'}\Gamma\|_{2,\mathcal C}+\|\Gamma\nabla_{x'}\eta^3\|_{2,\mathcal C})(\|\eta^3\nabla_{x'}\Phi\|_{2,\mathcal C}+\|\Phi\nabla_{x'}\eta^3\|_{2,\mathcal C})+$$$$+\frac 1{r_1}C(v,\eta).$$
Now,  let us exploit one more time the fact that our solution $v$ has bounded spatial derivatives of any order in the support of $\nabla\eta$ and inequality $|\Gamma|^2+|\Phi|^2=(|\omega_r|^2+|\omega_\theta|^2)/{r^2}\leq |\nabla \omega|^2$. So, 
$$B_3  \leq \frac {cC_1}{\ln (e/r_1)}\|\eta^3\nabla_{x'}\Gamma\|_{2,\mathcal C}\|\eta^3\nabla_{x'}\Phi\|_{2,\mathcal C}+$$$$+C(v,\eta,r_1)(\|\eta^3\nabla_{x'}\Gamma\|_{2,\mathcal C}+\|\eta^3\nabla_{x'}\Phi\|_{2,\mathcal C})+C(v,\eta,r_1).
$$

As to the last quantity $A_3$, we find
$$A_3=\int\limits_\mathcal C\Big(v_\theta\Big(\frac {v_r}r\Big)_{,3}\eta^3(\eta^3\Phi)_{,r}-v_\theta\Big(\frac {v_r}r\Big)_{,r}\eta^3(\eta^3\Phi)_{,3}\Big)dx+$$
$$+\int\limits_\mathcal C\Big(v_\theta\Big(\frac {v_r}r\Big)_{,3}\eta^3\Phi(\eta^3)_{,r}-v_\theta\Big(\frac {v_r}r\Big)_{,r}\eta^3\Phi(\eta^3)_{,3}\Big)dx=A_{31}+A_{32}.$$
The first term on the right hand side can be transformed  so that: 
$$A_{31}=\int\limits_\mathcal C\Big(v_\theta\Big(\frac {\eta^3v_r}r\Big)_{,3}(\eta^3\Phi)_{,r}-v_\theta\Big(\frac {\eta^3v_r}r\Big)_{,r}(\eta^3\Phi)_{,3}\Big)dx-
$$
$$
-\int\limits_\mathcal C\Big(v_\theta\Big(\frac {v_r}r\Big)(\eta^3)_{,3}(\eta^3\Phi)_{,r}-v_\theta\Big(\frac {v_r}r\Big)(\eta^3)_{,r}(\eta^3\Phi)_{,3}\Big)dx=A_0+A'_{31}.$$
Since $|v_r/r|\leq |\nabla v|$ and since the second integral is taken over support of $\nabla \eta$, we find
$$A'_{31}\leq C(v,\eta)\|\nabla (\eta^3\Phi)\|_{2,\mathcal C}\leq C(v,\eta)(\|\eta^3\nabla\Phi)\|_{2,\mathcal C}+1).
$$
To evaluate $A_0$,  we may use H\"older inequality and the same trick as above. It gives  us to the bound: 
$$A_0\leq cC_1^2\Big(\int\limits_\mathbb C\frac {1}{r^{2}\ln^6(e/|x'|)}\Big(\Big|\Big(\frac{\eta^3v_r}r\Big)_{,3}\Big|^2+\Big|\Big(\frac{\eta^3v_r}r\Big)_{,r}
\Big|^2\Big)dx\Big)^\frac 12\|\nabla (\eta^3\phi)\|_{2,\mathcal C}\leq $$
$$\leq \frac {cC_1^2}{\ln^4(e/|r_1|)}\Big(\int\limits_\mathbb C\frac {1}{r^{2}\ln^2(e/|x'|)}\Big(\Big|\Big(\frac{\eta^3v_r}r\Big)_{,3}\Big|^2+$$$$+\Big|\Big(\frac{\eta^3v_r}r\Big)_{,r}
\Big|^2\Big)dx\Big)^\frac 12\|\nabla (\eta^3\phi)\|_{2,\mathcal C}+C(v,\eta,r_1).$$
Here, it has been used boundedness of $\nabla^2v$ and $\nabla^2 \omega$ in domain $\{r>r_1\}\times ]-1,0[$

Now, we again apply Lemma \ref{Leray} and find    
  another   bound: 
$$A_0\leq \frac {cC_1^2}{\ln^4(e/|r_1|)}(\|\nabla_{x'}
\Big(\frac{\eta^3v_r}r\Big)_{,3}\|^2_{2,\mathcal C}+\|\nabla_{x'}
\Big(\frac{\eta^3v_r}r\Big)_{,r}\|^2_{2,\mathcal C})^\frac 12\|\nabla (\eta^3\phi)\|_{2,\mathcal C}
+$$$$+C(v,\eta,r_1).$$
It remains to take into account the statement of Lemma \ref{important} and conclude 
$$A_0\leq  \frac {cC_1^2}{\ln^4(e/|r_1|)}(\|\eta^3\Gamma_{,3}\|_{2,\mathcal C}+C(v,\eta))(\|\eta^3\nabla\Phi\|_{2,\mathcal C}+C(v,\eta))+C(v,\eta,r_1)\leq $$
$$\leq  \frac {cC_1^2}{\ln^4(e/|r_1|)}\|\eta^3\nabla\Gamma\|_{2,\mathcal C}\|\eta^3\nabla\Phi\|_{2,\mathcal C}+$$$$+C(v,\eta,r_1)(\|\eta^3\nabla\Gamma\|_{2,\mathcal C}+\|\eta^3\nabla\Phi\|_{2,\mathcal C})+C(v,\eta,r_1).$$

Our next aim is a bound for $A_{32}$. Obviously, we have 
$$A_{32}=\int\limits_\mathcal C\Big(v_\theta\Big(\frac {\eta^3v_r}r\Big)_{,3}\Phi(\eta^3)_{,r}-v_\theta\Big(\frac {\eta v_r}r\Big)_{,r}\Phi(\eta^3)_{,3}\Big)dx.$$
Here, we would like to use again that $v$, $\nabla v$, and $\nabla^2v$ are bounded over the support of $\nabla \eta$. In addition,  we know that $|v_\theta|\leq |v|$,  $|\Phi|\leq \nabla \omega$, $|(v_r/r)_{,3}|\leq |\nabla^2v|$, and $|(v_r/r)_{,3}|\leq |\nabla^2v|$. Therefore, we find
$$A_{32}\leq C(v,\eta).$$

So, finally, 
$$
A_3
\leq  \frac {cC_1^2}{\ln^4(e/|r_1|)}\|\eta^3\nabla\Gamma\|_{2,\mathcal C}\|\eta^3\nabla\Phi\|_{2,\mathcal C}+$$$$+C(v,\eta,r_1)(\|\eta^3\nabla\Gamma\|_{2,\mathcal C}+\|\eta^3\nabla\Phi\|_{2,\mathcal C})+C(v,\eta,r_1).
$$

Combing all the estimates made on this step, we shall have:
$$\frac 12\partial_t\int\limits_\mathcal C(\Phi \eta^3)^2+(\Gamma \eta^3)^2dx+\int\limits_\mathcal C(\eta^3|\nabla \Phi|)^2+\eta^3|\nabla \Gamma|)^2dx\leq $$

$$\leq\Big(\frac {cC_1}{\ln (e/r_1)} +\frac {cC_1^2}{\ln^4(e/|r_1|)}\Big)\|\eta^3\nabla\Gamma\|_{2,\mathcal C}\|\eta^3\nabla\Phi\|_{2,\mathcal C}+$$$$+C(v,\eta,r_1)(\|\eta^3\nabla\Gamma\|_{2,\mathcal C}+\|\eta^3\nabla\Phi\|_{2,\mathcal C})+C(v,\eta,r_1).$$
Here, it is assumed that a number $r_1\in ]0,1/4[$ so small as
$$\frac {cC_1}{\ln (e/r_1)} +\frac {cC_1^2}{\ln^4(e/|r_1|)}<2.$$
So, if the latter condition holds,  the key estimate can be derived by more or less standard arguments. It is as follows:
$$\sup\limits_{-1<t<0}\int\limits_\mathcal C\eta^6(|\Gamma|^2+|\Phi|^2)dx+\int\limits_Q(\eta^3|\nabla \Phi|)^2+(\eta^3|\nabla \Gamma|)^2dx dt\leq C(v,\eta,r_1).$$

\vskip 0.5cm
\noindent
{\bf  Step 4.} {\it  Final Conclusion.} Now, let us show that the origin $z=0$ is a regular point of $v$. To this end, we are going to use the estimate proved at the previous step.  It can be re-written to the form: 
\begin{equation}\label{mainbound}
|\eta^3\Phi|^2_{2,Q}+|\eta^3\Gamma|^2_{2,Q}<\infty,	
\end{equation}
where as usual $|f|^2_{2,Q}=\sup\limits_{-1<t<0}\|f(\cdot,t)\|^2_{2,\mathcal C}+\|\nabla f\|^2_{2,Q}$. Since $\omega_\theta=r\Gamma$,
one can state that $|\eta^3\omega_\theta|_{2,Q}<\infty$ as well. It will be used to make various estimates of the solution to equation (\ref{equforvbar}). 
 Indeed, the elliptic theory implies two classical bounds:
$$\|\nabla (\eta^3\overline v)\|_{2,\mathcal C}
\leq c\|\omega_\theta\eta^3\|_{2,\mathcal C}+c\||\nabla \eta^3||\overline v|\|_{2,\mathcal C}$$
and
$$\|\nabla^2 (\eta^3\overline v)\|_{2,\mathcal C}\leq
c\||\nabla \eta^3||\nabla \overline v|\|_{2,\mathcal C}+c\||\nabla^2 \eta^3||\overline v|\|_{2,\mathcal C}+c\|{\rm curl} (\omega_\theta\eta^3e_\theta)\|_{2,\mathcal C}.$$
Let us notice that 
$${\rm curl} (\omega_\theta\eta^3e_\theta)=-(\omega_\theta\eta^3)_{,3}e_r+((\omega_\theta\eta^3)_{,r}+\Gamma\eta^3)e_3$$
and 
$$|\nabla \overline v|\leq |\nabla v|.$$ 
 Then,  our previous arguments can be exploited to describe properties of the solution $v$ in the support of $\nabla\eta$ and conclude that the quantity $|\nabla (\eta^3\overline v)|_{2,Q}$ is bounded that in turn yields boundedness of two norms: $\| \nabla (\eta^3\overline v)\|_{\frac {10}3,Q}$ and $\|\nabla (\eta^3\overline v)\|_{\infty,\frac {10}3,Q}$. So, for all sufficiently small $R$, we find
\begin{equation}\label{firstpart}
	\frac 1{R^2}\int\limits_{Q(R)}|\overline v|^3dz\leq cR^\frac{11}5\|\eta^3 \overline v\|^3_{\infty,\frac {10}3,Q}\to 0\end{equation}
as $R\to0$.

It remains to understand what happens with $v_\theta$. Indeed, we have
$$\eta^3v_\theta(r,x_3,t)=\int\limits^{x_3}_{-1}(\eta^3v_\theta)_{,3}(r,y,t)dy
$$ and thus 
$$\sup\limits_{-1|<x_3<1}|\eta^3v_\theta(r,x_3,t)|\leq c\Big(\int\limits^{1}_{-1}|(\eta^3v_\theta)_{,3}|(r,y,t)|^\frac {10}3dy\Big)^\frac 3{10}.
$$ The latter inequality can be re-written so that
$$\sup\limits_{-1|<x_3<1}\frac 1r|\eta^3v_\theta(r,x_3,t)|\leq c\Big(\int\limits^{1}_{-1}(|\Phi\eta^3|+|(\eta^3)_{,3}|v_\theta|/r)^\frac {10}3(r,y,t)dy\Big)^\frac 3{10}.
$$
Taking into account the inequality $|v_\theta|/r\leq |\nabla v|$ and  boundedness of $|\nabla v|$ in the support of $\nabla \eta$, after integration by parts, we  get:
$$\int\limits^0_{-1}\int\limits^1_0\Big(\sup\limits_{-1|<x_3<1}\frac 1r|\eta^3v_\theta(r,x_3,t)|\Big)^\frac {10}3rdr dt\leq c\|\Phi\eta^3\|^\frac {10}3_{\frac {10}3,Q}+C(v,\eta)<\infty.$$
So, for sufficiently small $R>0$, we have
$$\frac 1{R^2}\int\limits_{Q(R)}|v_\theta|^3dz\leq c\frac 1{R^\frac 32}\Big[\int\limits_{Q(R)}|v_\theta|^\frac {10}3dz\Big]^\frac 9{10}\leq $$
$$\leq c\frac 1{R^\frac 32}
\Big[\int\limits_{-R^2}^0\int\limits^R_0 \Big(R^\frac {10}3\int\limits^R_{-R}|v_\theta/r|^\frac {10}3dx_3\Big)rdr dt\Big]^\frac 9{10}\leq $$
$$\leq c
\frac {R^{(\frac{10}{3}+1)\frac{9}{10}}}{R^\frac 32}\Big[\int\limits^0_{-1}\int\limits^1_0\sup\limits_{-1<x_3<1}\Big|\frac {v_\theta\eta^3(r,x_3,t)}r\Big|^\frac {10}3rdr dt\Big]^\frac 9{10}\to 0
$$
as $R\to0$. According to the partial regularity theory for the Navier-Stokes equations and to (\ref{firstpart}), the origin $z=0$ is a regular point of $v$. Theorem \ref{mainresult} is proved.


\setcounter{equation}{0}
\section{Proof of Theorem \ref{PanType}}
Here, we are going to use two scale-invariant inequalities  proved in \cite{SS2018}. They are as follows: 
$$C(\varrho)\leq c\varepsilon\mathcal E(\varrho)+c_1(s,l,\varepsilon)G(\varrho),\quad G(\varrho)=(M^{s,l}(\varrho))^\frac 1{l(2-3/s-2/l)}$$
and 
$$\mathcal E(\theta \varrho)\leq \Big(\theta\mathcal E(\varrho)+\frac 1{\theta^2}C(\varrho)+\frac 1{\theta^\frac 43}C^\frac 23(\varrho)\Big),
$$
where $\varepsilon>0$, $0<\theta<1$ and $0<\varrho <1$, various scale-invariant quantities are defined as
$$\mathcal E(\varrho)=E(\varrho)+A(\varrho)+D(\varrho),\quad C(\varrho)=\frac1{\varrho^2}\int\limits_{Q(\varrho)}|v|^3dz,$$
$$E(\varrho)=\frac 1\varrho\int\limits_{Q(\varrho)}|\nabla v|^2dz,
\quad A(\varrho)=\frac 1{\varrho}\sup\limits_{-\varrho^2<t<0}\int\limits_{\mathcal C(\varrho)}|v(x,t)|^2dx,
$$
$$D(\varrho)=\frac 1{\varrho^2}\int\limits_{Q(\varrho)}|q|^\frac 32dz, \quad M^{s,l}(\varrho)=\frac 1{\varrho^\kappa}\int\limits_{-\varrho^2}^0dt\Big(\int\limits_{\mathcal C(\varrho)}|v|^sdx\Big)^\frac ls,
$$

$\kappa=l\Big(\frac 3s+\frac 2l-1\Big)$ and numbers $s$ and $l$ satisfies restrictions:
$$\frac 12>\frac 3s+\frac 2l-\frac 32>\max\Big\{\frac 1{2l},\frac 12 -\frac 1l\Big\}.$$

From the above inequalities, it can be easily derived
$$\mathcal E(\theta\varrho)\leq c\Big[\Big(\theta+\frac \varepsilon{\theta^2}+\frac {\varepsilon^\frac 12}{\theta}\Big)\mathcal E(\varepsilon) +c_2(\varepsilon,\theta)+c_3(s,l,\varepsilon,\theta)G(\varepsilon)\Big].
$$ Next,  we choose  a positive number $\theta$ to satisfy the inequality $c\theta \leq 1/4$ and then pick up a positive number $\varepsilon $ so that 
$$\frac \varepsilon{\theta^2}+\frac {\varepsilon^\frac 12}\theta<\frac 14.$$
Hence, we have
$$\mathcal E(\theta\varrho)\leq \frac 12\mathcal E(\varrho)+c+c_5(s,l)G(\varrho).$$
After iterations, we get the inequality
$$\mathcal E(\theta^k\varrho)\leq \frac 1{2^k}\mathcal E(\varrho)+c+c_5(s,l)\sum\limits^{k-1}_{i=0}\frac {G(\theta^i\varrho)}{2^{k-1-i}}$$ being valid for each natural $k$.

Now,  assume that condition (\ref{Pan})  holds,    
 set $s=\frac 74$ and $l=10$,  and try to evaluate the quantity $M^{s,l}$ and thus the quantity $G$. So, we have
$$
G(\varrho)\leq \frac 1{\varrho^\frac{64}7}\int\limits^0_{-\varrho^2}dt\Big(2\pi_0\int\limits^\varrho_{-\varrho}dx_3\int\limits^\varrho_0\Big(\frac cr\ln^\alpha \ln\frac 1r\Big)^\frac 74rdr\Big)^{
\frac {40}7
}.
$$
To estimate the above integral, we assume that a number $\alpha$ and the variable $\varrho$ are positive  and sufficiently small. 
Then,  integration by parts gives us: 
$$\int\limits^\varrho_0\Big(\frac 1r\ln^\alpha \ln\frac 1r)^\frac 74 r dr=4\varrho^\frac 14\ln^\frac {7\alpha}4\ln\frac 1\varrho+7\alpha\int\limits^\varrho_0\frac 1{\varrho^\frac 34}\frac 1{\ln^{1-\frac 74\alpha}\ln\frac 1\varrho}\frac 1{\ln\frac 1\varrho}dr \leq 
$$
$$\leq c\varrho^\frac 14\ln^\frac {7\alpha}4\ln\frac 1\varrho.$$
As a result,  
$$G(\varrho)\leq c\ln^{\frac{35}3\alpha}\ln\frac 1\varrho.
$$
Therefore,
$$\mathcal E(\theta^k\varrho)\leq \frac 1{2^k}\mathcal E(\varrho)+c+c\ln^{\frac{35}3\alpha}\ln\frac 1{\theta^k\varrho}.
$$ From the last estimate, it follows that
$$\mathcal E(\varrho) \leq C(\mathcal E( 1/2),\alpha)\ln^{\frac{35}3\alpha}\ln 1/{\varrho}
$$ for all $0<\varrho\leq 1/2$.
It remains to notice $f(R)+M(R)\leq c \sqrt{\mathcal E(R)}$. Theorem \ref{PanType} has been proved.

\setcounter{equation}{0}
\section{Corrections to the preprint \cite{Seregin2021}
}

In the note \cite{Seregin2021},
there is an error in calculations of a certain integral at the very end of Proposition 1.4. It can be corrected so that all results of  note \cite{Seregin2021}  remain  true. The correct version of Proposition 1.4  of \cite{Seregin2021} is Proposition \ref{modcon} of the present note. Keeping notation of the note \cite{Seregin2021}, let us comment to changes  to the proof.

The first comment is related to more delicate estimate of $\hat\beta_2$ and is as follows.

Obviously, there exists a number $0<R_{*5}(M_0,c_*,\alpha)\leq \min\{1/6,R_{*2}\}$ such that
$$2c(M_0,c_*)\ln^{224\alpha-1}\sqrt{\ln \frac1{R}}\leq \frac 12$$
and 
$$c(M_0,c_*)\ln^{224\alpha}\sqrt{\ln \frac1{R}}\geq
\ln g(2R) 
$$
for $0<R\leq R_{*5}(M_0,c_*,\alpha)$ and thus
$$-c(M_0,c_*)\ln^{224\alpha}\sqrt{\ln \frac1R}=$$$$=-2c(M_0,c_*)\ln^{224\alpha}\sqrt{\ln \frac1R}++c(M_0,c_*)\ln^{224\alpha}\sqrt{\ln \frac1R}\geq $$$$-\Big(2c(M_0,c_*)\ln^{224\alpha-1}\sqrt{\ln \frac1R}\Big)\ln\sqrt{\ln \frac1R}+\ln g(2R)\geq$$$$\geq -\ln\Big(\ln \frac1R\Big)^\frac 14+\ln g(2R)
.$$
Now, the number $\hat\beta_2$ is estimated as follows:
\begin{equation}
	\label{hatbeta2}
\hat\beta_2\geq \Big(\ln\frac 1R
\Big)^{-\frac 14}g(2R)
\end{equation} for $0<R\leq R_{*5}(M_0,c_*,\alpha)$. 
Then $$\beta(2R)=\frac c{\ln^\frac 34(1/R)}$$
and estimate of $\eta_k$ is as follows:
$$\leq -c\sum\limits^k_{i=0}(\ln ( 2^{2i-1}/R))^{-\frac 34}
=-c\sum\limits^k_{i=0}\frac 1{(i\ln 4+\ln (1/{(2R)}))^\frac 34
}\leq$$
$$\leq-c\int\limits^{k+1}_0\frac {dx}{(x\ln 4+\ln( 1/(2R))
)^\frac 34}=$$$$=-\frac {4c}{\ln 4}(x\ln 4+\ln(1/(2R)))^\frac 14\Big|^{k+1}_0=
$$$$=-c\Big(\ln^\frac 14(2^{2k+1}/R)
-\ln^\frac 14(1/(2R))\Big)
\Big).$$
So, (\ref{osc}) follows. 

Changes in the proof of Theorem 1.3 in \cite{Seregin2021} are based on the inequality
$$|\sigma (\varrho,x_3,t)|\leq C(c_*,\alpha)e^{-c\ln^\frac 14(1/(2\varrho))}\Sigma_0\leq C(c_*,\alpha)\Sigma_0\frac {m!}{c^m\ln^{\frac m4}(1/(2\varrho))},$$
being valid for all natural numbers $m$, and references to \cite{Wei2016}, \cite{LeiZhang2017}, and \cite{LiPan2019}.


\begin{thebibliography}{99}



\bibitem {CKN} Caffarelli, L., Kohn, R.-V., Nirenberg, L., \emph{Partial regularity of
suitable weak solutions of the Navier-Stokes equations}, Comm. Pure
Appl. Math., Vol. XXXV (1982), pp. 771--831.





\bibitem{CL2002}
Chae D., Lee, J., On the regularity of the axisymmetric solutions
of the Navier-Stokes equations, Math. Z., 239(2002), 645-671.

\bibitem{ChenFangZhang2017}
C. Chen, D. Fang and T. Zhang, Regularity of 3D axisymmetric Navier-Stokes equations,  Discrete Continuous Dynamical Systems - A, 2017, 37 (4) : 1923-1939.


\bibitem{ChenStrainYauTsai2009}
C. Chen, R. M. Strain, H. Yau and T. Tsai, Lower bounds on the blow-up rate of the axisymmetric Navier-Stokes equations II, Comm. Part. Diff. Equa., 34(2009), 203--232.

\bibitem{HR2011}
Hmidi, T., Rousset, F., Global well-posedness for the Euler-Boussinesq system with axisymmetric data,
Journal of Functional Analysis, 260 (2011), 745-796.








\bibitem{L1968}
Ladyzhenskaya, O. A., On unique solvability of the
three-dimensional Cauchy problem for the Navier-Stokes equations
under the axial symmetry, Zap. Nauchn. Sem. LOMI 7(1968), 155-177.

\bibitem {LS1999}
Ladyzhenskaya, O. A., Seregin, G. A., On partial regularity of
suitable weak solutions to the three-dimensional Navier-Stokes
equations, J.  math. fluid mech.,  1(1999), pp. 356-387.

\bibitem{LeiZhang2011}
Z. Lei and Q. Zhang, A Liouville theorem for the axially symmetric Navier-Stokes equations, J. Funct. Anal., 261(2011), 2323--2345.

\bibitem{LeiZhang2017}
Z. Lei and Q. Zhang. Criticality of the axially symmetric Navier-Stokes equations. Pacific Journal of Mathematics, 289(1):169--187, 2017.


\bibitem{LMNP1999}
Leonardi, S., Malek, Necas, J., \& Pokorny, M., On axially
simmetric flows in $\mathbb R^3$, ZAA, 18(1999),  639-649.



\bibitem{LiPan2019}
 Z. Li, X. Pan, Some remarks on regularity criteria of axially symmetric Navier-Stokes equations. Commun. Pure Appl. Anal. 18 (2019), no. 3, 1333--1350. 

\bibitem {Lin} Lin, F.-H., A new proof of the Caffarelli-Kohn-Nirenberg 
theorem, Comm. Pure Appl.,
 51 (1998), 241--257.

\bibitem{MiaoZheng2013}
 C. Miao and X. Zheng, On the global well-posedness for the Boussinesq system with horizontal dissipation. Comm. Math. Phys. 321 (2013), no. 1, 33–-67.



\bibitem{NP2001} Neustupa, J., Pokorny, M., Axisymmetric flow of
Navier-Stokes fluid in the whole space with non-zero angular
velocity compnents, Math. Bohemica, 126(2001), 469-481.

\bibitem{Pan16} Pan, X., Regularity of solutions to axisymmetric Navier-Stokes equations with a slightly supercritical condition, Journal of Differential Equations
Volume 260, Issue 12, 15 June 2016, 8485-8529.


\bibitem{Po01}
Pokorny, M., A regularity criterion for the angular velocity
component in the case of axisymmetric Navier-Stokes equations,
2001.








 
  
  
  
 \bibitem{Seregin2020} Seregin, G. Local regularity of axisymmetric solutions to the Navier-Stokes equations. Anal. Math. Phys. 10 (2020), no.4, Paper No.46, 20 pp. 
 
 
\bibitem{Seregin2021} Seregin, G., A Slightly Supercritical Condition of Regularity of Axisymmetric Solutions to the Navier-Stokes Equations, arxiv.org/abs/2109.09344v1.
  
 
 
 
 \bibitem{SS2009} Seregin,
 G., Sverak, V.,  
 On Type I singularities of the local axi-symmetric solutions of the Navier-Stokes equations, Communications in PDE’s,  34(2009), pp. 171--201.
 
 \bibitem{SS2018}
  Seregin, G., Sverak, V., Regularity criteria for Navier-Stokes solutions. Handbook of mathematical analysis in mechanics of viscous fluids, 829--867, Springer, Cham. 
 
\bibitem{SZ2007} Seregin, G., Zajaczkowski, W., A sufficient condition of regularity for axially symmetric solutions to the Navier-Stokes equations, SIAM J. Math. Anal, 39(2007), pp. 669-685.
 
 \bibitem{SerZhou2018} Seregin. G., Zhou, D., Regularity of solutions to the Navier-Stokes equations in $\dot B^{-1}_{\infty, \infty}$, Zapiski POMI, Vol. 407, 2018, pp.119--128.



 



\bibitem{UY1968}
Ukhovskij, M. R., Yudovich, V. L., Axially symmetric motions of
ideal and viscous fluids filling all space, Prikl. Mat. Mech. 32
(1968), 59-69. 

\bibitem{Wei2016}
D. Wei, Regularity criterion to the axially symmetric Navier-Stokes equations. J. Math. Anal. Appl. 435 (2016), no. 1, 402--413

 
 \bibitem{ZhangZhang2014}
 P. Zhang and T. Zhang, Global axisymmetric solutions to the three-dimensional Navier-Stokes equations system, 
 Int. Math. Res. Not., 
 2014(2014), 610--642.
 
 \end{thebibliography}
\end{document}